\newtheorem{thm}{Theorem}
\numberwithin{defn}{section}
\numberwithin{thm}{section}
\numberwithin{Lemma}{section}
\numberwithin{Corollary}{section}
\numberwithin{Example}{section}
\numberwithin{subsection}{section}
\numberwithin{Remark}{section}
\numberwithin{equation}{section}
\numberwithin{ppn}{section}
\begin{document}
\title[ Extension of  Newton-Steffenssen  method  . . .]
{ Extension of Newton-Steffenssen  method by Gejji-Jafari decomposition  Technique  for solving nonlinear equations}
\author{J. P. Jaiswal }
\date{}
\maketitle


\textbf{Abstract.} 
In this paper we extend Newton-Steffenssen method for solving nonlinear equations, introduced by Sharma [J.R. Sharma, A composite third order Newton-Steffenssen method for solving nonlinear equations, Appl. Math. Comput. 169 (2005), 242-246] by using the Gejji-Jafari  decomposition technique. 
Several numerical examples are given to illustrate the efficiency and performance of this new method. 
\\

\textbf{Mathematics Subject Classification (2000).}
65H05, 65H10, 41A25.\\

\textbf{Keywords and Phrases.} Nonlinear equations, order of convergence, simple root, decomposition method, numerical results.

\section{Introduction}
Solving nonlinear equations is one of the most important problems in numerical analysis. To solve nonlinear equations,
iterative methods such as Newton's method are usually used. Throughout this paper we consider iterative methods to find a simple root $\alpha$,  of a nonlinear equation $f(x)=0$, where $f:I\subset R \rightarrow R$ for an open interval $I$. Many variants of the Newton's method have been suggested in the literature by different techniques.
One of them is Adomain decomposition method which is used in \cite{Chun} and other literatures.  To implement Adomain decomposition method, one has to calculate Adomain polynomial, which is another difficult task. Other techniques have also their limitations.  To overcome these difficulties, a new decomposition technique is introduced by Gejji-Jafari in \cite{Gejji}. In this paper we use this technique to extend Newton-Steffenssen method introduced by  Sharma \cite{Sharma}.


\section{Gejji-Jafari decomposition  method}
Consider the nonlinear equation
\begin{eqnarray}\label{eqn:21}
f(x)=0.
\end{eqnarray}
Throughout the paper we assume that $f(x)$ has a simple root at $\alpha$ and $\gamma$ is an initial guess close to $\alpha$. Let us transform the nonlinear equation $(\ref{eqn:21})$ into the following canonical form:
\begin{eqnarray}\label{eqn:22}
x=c+N(x),
\end{eqnarray}
where $N(x)$ nonlinear operator and $c$ is a constant. The main idea of this technique is to look for a solution having the series form
\begin{eqnarray}\label{eqn:23}
x=\sum_{i=0}^{\infty}x_i.
\end{eqnarray}
The nonlinear operator N can be decomposed as
\begin{eqnarray}\label{eqn:24}
N(x)=N(x_0)+\sum_{i=1}^{\infty}\left\{N\left(\sum_{j=0}^{i}x_i\right)-N\left(\sum_{j=0}^{i-1}x_i\right)\right\}.
\end{eqnarray}
From equations $(\ref{eqn:23})$ and $(\ref{eqn:24})$, equation $(\ref{eqn:22})$ is equivalent to
\begin{eqnarray}\label{eqn:25}
\sum_{i=0}^{\infty}x_i=c+N(x_0)+\sum_{i=1}^{\infty}\left\{N\left(\sum_{j=0}^{i}x_i\right)-N\left(\sum_{j=0}^{i-1}x_i\right)\right\}.
\end{eqnarray}
Thus we have the following recurrence relation:
\begin{eqnarray}\label{eqn:26}
x_0&=&c, \nonumber\\
x_1&=&N(x_0),\nonumber\\
x_2&=&N(x_0+x_1)-N(x_0),\nonumber\\
x_2&=&N(x_0+x_1+x_2)-N(x_0+x_1),\nonumber\\
&.& \nonumber\\
&.& \nonumber\\
&.& \nonumber\\
x_{m+1}&=&N(x_0+x_1+. . .+x_m)-N(x_0+x_1+. . .+x_{m-1}).
\end{eqnarray}
Then
\begin{eqnarray}\label{eqn:27}
x_0+x_1+. . .+x_{m+1}=N(x_0+x_1+. . .+x_m); m=1,2,...,
\end{eqnarray}
and \begin{eqnarray}\label{eqn:28}
x=c+\sum_{i=1}^{\infty}x_i
\end{eqnarray}
 In \cite{Gejji} it is proved that the series $\sum_{i=0}^{\infty}x_i$ converges absolutely and uniformly to a unique
solution of $(\ref{eqn:22})$.

\section{ Extension of Newton-Steffenssen  method}
Consider the following coupled system:
\begin{eqnarray}\label{eqn:31}
f(\gamma)+(x-\gamma)\left[f'(\gamma)+\frac{g(x)}{(x-\gamma)}\right]=0,
\end{eqnarray}
\begin{eqnarray}\label{eqn:32}
g(x)=f(x)-f(\gamma)-f'(\gamma)(x-\gamma).
\end{eqnarray}
The  equation $(\ref{eqn:31})$ of the above system can be rewritten as
\begin{eqnarray}\label{eqn:33}
x=\gamma-\frac{f(\gamma)(x-\gamma)}{f'(\gamma)(x-\gamma)+g(x)}.
\end{eqnarray} 
Comparing Equations $(\ref{eqn:22})$, Ist of $(\ref{eqn:26})$ and $(\ref{eqn:33})$, we have
\begin{eqnarray}\label{eqn:34}
x_0=c=\gamma.
\end{eqnarray}
and
\begin{eqnarray}\label{eqn:34a}
N(x)=-\frac{f(\gamma)(x-\gamma)}{f'(\gamma)(x-\gamma)+g(x)}.
\end{eqnarray}
Note that $x$ is approximated by
\begin{eqnarray}\label{eqn:35}
X_m=x_0+x_1+. . .+x_m,
\end{eqnarray}
where $\lim_{m \rightarrow \infty} X_m=x$.

For $m=0$, 
\begin{eqnarray}\label{eqn:36}
x\approx X_0=x_0=c=\gamma.
\end{eqnarray}

For $m=1$, 
\begin{eqnarray}\label{eqn:37}
x\approx X_1=x_0+x_1=\gamma+N(x_0).
\end{eqnarray}
where $N(x_0)$ is to be calculate. From $(\ref{eqn:34a})$ we have
\begin{eqnarray}\label{eqn:38}
N(x_0)=-\frac{f(\gamma)(x_0-\gamma)}{f'(\gamma)(x_0-\gamma)+g(x_0)}.
\end{eqnarray}
Thus $(\ref{eqn:37})$ becomes
\begin{eqnarray}\label{eqn:39}
x\approx X_1=x_0+x_1=\gamma-\frac{f(\gamma)(x_0-\gamma)}{f'(\gamma)(x_0-\gamma)+g(x_0)}=x_0-\frac{f(x_0)}{f'(x_0)}.
\end{eqnarray}
which yields the famous Newton's method with second order convergence
\begin{eqnarray}\label{eqn:39a}
x_{n+1}=x_n-\frac{f(x_n)}{f'(x_n)}.
\end{eqnarray}

For $m=2$, 
\begin{eqnarray}\label{eqn:310}
x\approx X_2=x_0+x_1+x_2=\gamma+N(x_0+x_1).
\end{eqnarray}
where $N(x_0+x_1)$ is to be calculate. From $(\ref{eqn:34a})$, $(\ref{eqn:32})$ and $(\ref{eqn:39})$ we have
\begin{eqnarray}\label{eqn:311}
N(x_0+x_1)=\frac{f(\gamma)^2}{f'(\gamma)\{f(x_0+x_1)-f(\gamma)\}}.
\end{eqnarray}
Thus we have 
\begin{eqnarray}\label{eqn:312}
x\approx X_2=x_0+x_1+x_2=\gamma-\frac{f(\gamma)^2}{f'(\gamma)\{f(\gamma)-f(x_0+x_1)\}},
\end{eqnarray}
which gives the following well known Newton-Steffenssen method \cite{Sharma} with third order convergence
\begin{eqnarray}\label{eqn:313}
x_{n+1}=x_n-\frac{f(x_n)^2}{f'(x_n)\{f(x_n)-f(y_n)\}},
\end{eqnarray}
where $y_n=x_n-\frac{f(x_n)}{f'(x_n)}$.

For $m=3$, 
\begin{eqnarray}\label{eqn:313}
x\approx X_3=x_0+x_1+x_2+x_3=\gamma+N(x_0+x_1+x_2).
\end{eqnarray}
where $N(x_0+x_1+x_2)$ is to be calculate. From $(\ref{eqn:34a})$, $(\ref{eqn:32})$ and $(\ref{eqn:312})$ we have
\begin{eqnarray}\label{eqn:314}
N(x_0+x_1+x_2)=\frac{f(\gamma)^3}{f'(\gamma)\{f(\gamma)-f(x_0+x_1)\}\{f(x_0+x_1+x_2)-f(\gamma)\}},
\end{eqnarray}
Thus we have
\begin{eqnarray}\label{eqn:315}
x\approx X_3&=&x_0+x_1+x_2+x_3 \nonumber\\
&=&\gamma-\frac{f(\gamma)^3}{f'(\gamma)\{f(\gamma)-f(x_0+x_1)\}\{f(\gamma)-f(x_0+x_1+x_2)\}},.
\end{eqnarray}
which suggests the following three-step iterative method
\begin{eqnarray}\label{eqn:316}
y_n&=&x_n-\frac{f(x_n)}{f'(x_n)},\nonumber\\
z_n&=&x_n-\frac{f(x_n)^2}{f'(x_n)\{f(x_n)-f(y_n)\}},\nonumber\\
x_{n+1}&=&x_n-\frac{f(x_n)^3}{f'(x_n)\{f(x_n)-f(y_n)\}\{f(x_n)-f(z_n)\}}.
\end{eqnarray}
Similarly we can obtain higher-order iterative methods. For general $n$ it can be shown that the $(n-1)$-step iterative method is
\begin{eqnarray}\label{eqn:316a}
a1_n&=&x_n-\frac{f(x_n)}{f'(x_n)},\nonumber\\
a2_n&=&x_n-\frac{f(x_n)^2}{f'(x_n)\{f(x_n)-f(a1_n)\}},\nonumber\\
a3_n&=&x_n-\frac{f(x_n)^3}{f'(x_n)\{f(x_n)-f(a1_n)\}\{f(x_n)-f(a2_n)\}}\nonumber\\
&.& \nonumber\\
&.& \nonumber\\
&.& \nonumber\\
a(n-2)_n&=&x_n-\frac{f(x_n)^{n-2}}{f'(x_n)\{f(x_n)-f(a1_n)\}\{f(x_n)-f(a2_n)\}. . . \{f(x_n)-f(a(n-3)_n)\}}\nonumber\\
x_{n+1}&=&x_n-\frac{f(x_n)^{n-1}}{f'(x_n)\{f(x_n)-f(a1_n)\}\{f(x_n)-f(a2_n)\}. . . \{f(x_n)-f(a(n-2)_n)\}}.
\end{eqnarray}
\\
Now we prove that order of convergence of the iterative method $(\ref{eqn:316})$ is four, which is shown by the following theorem:

\begin{thm}
Let $\alpha \in I$ be a simple zero of a sufficiently differentiable function $f:I\subseteq \Re \rightarrow \Re$ in an open interval I. If $x_0$ is sufficiently close to $\alpha$, then the three-step method  defined by $(\ref{eqn:316})$ has order  fourth-order convergence.
\end{thm}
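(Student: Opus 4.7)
The plan is to carry out a standard Taylor-expansion error analysis about the simple root $\alpha$. Let $e_n = x_n - \alpha$ and set $c_k = \frac{f^{(k)}(\alpha)}{k!\,f'(\alpha)}$ for $k\ge 2$. Since $f(\alpha)=0$ and $f'(\alpha)\neq 0$, standard expansions give
\begin{equation*}
f(x_n) = f'(\alpha)\bigl[e_n + c_2 e_n^2 + c_3 e_n^3 + c_4 e_n^4 + O(e_n^5)\bigr],
\end{equation*}
\begin{equation*}
f'(x_n) = f'(\alpha)\bigl[1 + 2c_2 e_n + 3c_3 e_n^2 + 4c_4 e_n^3 + O(e_n^4)\bigr].
\end{equation*}
Dividing these series, I would obtain the Newton step error $y_n - \alpha = c_2 e_n^2 + (2c_3 - 2c_2^2)e_n^3 + O(e_n^4)$, which is the well-known quadratic convergence of Newton's method.

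Next I would compute $f(y_n)$ by substituting $y_n - \alpha$ into the Taylor series of $f$ about $\alpha$, keeping terms up to $O(e_n^5)$. Forming $f(x_n)-f(y_n)$ and then the ratio $\dfrac{f(x_n)^2}{f'(x_n)\{f(x_n)-f(y_n)\}}$ produces the Newton--Steffensen step; subtracting from $x_n$ gives an expansion of the form
\begin{equation*}
z_n - \alpha = A\, e_n^3 + B\, e_n^4 + O(e_n^5),
\end{equation*}
for explicit constants $A,B$ depending on $c_2,c_3$ (this reproduces the third-order convergence of Sharma's method).

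For the final step I would insert this expansion of $z_n-\alpha$ into the Taylor series of $f$ to get $f(z_n) = f'(\alpha)\bigl[A e_n^3 + O(e_n^4)\bigr]$, and form $f(x_n) - f(z_n)$. The cubic factor $f(x_n)^3$ in the numerator, combined with the denominator factors $f'(x_n)\{f(x_n)-f(y_n)\}\{f(x_n)-f(z_n)\}$ of total order $e_n \cdot e_n^2 \cdot e_n^3 = e_n^6$, produces a quotient of leading order $e_n^3$; after subtraction from $x_n$ I expect the $e_n$, $e_n^2$, and $e_n^3$ contributions in $x_{n+1}-\alpha$ to cancel, leaving
\begin{equation*}
e_{n+1} = K\, e_n^4 + O(e_n^5)
\end{equation*}
with $K$ a polynomial in $c_2,c_3$. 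This would establish fourth-order convergence.

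The main obstacle is purely computational: one must carry every expansion to at least $O(e_n^5)$ so that the $e_n^4$ coefficient in $e_{n+1}$ is obtained correctly, and the successive cancellations in the denominator products $\{f(x_n)-f(y_n)\}$ and $\{f(x_n)-f(z_n)\}$ must be tracked with care, since a single dropped term at intermediate order will falsely inflate or deflate the claimed convergence rate. No conceptual difficulty is expected beyond this careful bookkeeping, and symbolic computation (e.g.\ Mathematica) would make the verification straightforward.
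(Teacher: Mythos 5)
Your proposal follows essentially the same route as the paper's proof: a Taylor expansion of $f(x_n)$ and $f'(x_n)$ about the simple root, successive expansions of $y_n-\alpha$, $f(y_n)$, $z_n-\alpha$, $f(z_n)$, and a final cancellation through order $e_n^3$ in $x_{n+1}-\alpha$; carrying out the bookkeeping you describe yields exactly the paper's error equation $e_{n+1}=c_2^3e_n^4+O(e_n^5)$ (with your normalization $c_k=f^{(k)}(\alpha)/(k!\,f'(\alpha))$, which is the cleaner convention). The approach is correct and the only remaining work is the routine symbolic computation you already identify.
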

\begin{proof}
By applying the Taylor series expansion theorem and taking account $f(\alpha)=0$, we can write 
\begin{equation}\label{eqn:317}
f(x_n)=e_n+c_2e_n^2+c_3e_n^3+c_4e_n^4+c_5e_n^5+c_6e_n^6+c_7e_n^7+c_8e_n^8+O(e_n^9),
\end{equation}
where $c_k=\frac{f^{k}(\alpha)}{\lfloor k},\ k=1, 2, . . .$ and $e_n$ be the error in $x_n$ after $n$ iterations  i.e. $e_n=x_n-\alpha$;
\begin{eqnarray}\label{eqn:318}
f'(x_n) &=&f'(\alpha)[1+2c_2e_n+3c_3e_n^2+4c_4e_n^3+5c_5^5e_n^4\nonumber\\
            &&+6c_6e_n^5+7c_7e_n^6+8c_8e_n^7+O(e_n^{8})].
\end{eqnarray}
By considering the above relations, one can obtain
\begin{equation}\label{eqn:319}
y_n=\alpha+c_2e_n^2+2(c_3-c_2^2)e_n^3+......+O(e_n^{8}).
\end{equation}
At this time, we should expand $f(y_n)$ around the exact $\alpha$ root by taking into consideration $(\ref{eqn:319})$. Accordingly, we have
\begin{equation}\label{eqn:320}
f(y_n)=f'(\alpha)[c_2e_n^2+2(-c_2^2+c_3)e_n^3+......+O(e^{12})].
\end{equation}
From $(\ref{eqn:317})$, $(\ref{eqn:318})$ and $(\ref{eqn:320})$ it can found that
\begin{equation}\label{eqn:321}
z_n=\alpha+c_2^2e^3+(3c_2c3-3c_2^3)e^4+ . . .+O(e_n^{8}).
\end{equation}
Now expand $f(z_n)$ around the exact $\alpha$ root, we have
\begin{equation}\label{eqn:322}
f(z_n)=f'(\alpha)[c_2^2e^3+(3c_2c3-3c_2^3)e^4+ . . .+O(e_n^{8})].
\end{equation}
Finally by virtue of $(\ref{eqn:317})$, $(\ref{eqn:318})$, $(\ref{eqn:320})$ and $(\ref{eqn:322})$ it can be obtained that
\begin{equation}\label{eqn:322}
e_{n+1}=c_2^3e^4+O(e_n^{5}).
\end{equation}

\end{proof}
Similarly we can prove that iterative method $(\ref{eqn:316a})$ has $n^{th}$-order convergence.  
  

\section{ Numerical Testing}
Here we consider, the following eight test functions to illustrate the accuracy of new iterative method. Some of them are taken from \cite{Neta} and some from \cite{Gupta}. The root of each nonlinear test function is also listed. All the computations reported here we have done using Mathematica 8. Scientific computations in many branches of science and technology demand very high precision degree of numerical precision.  We consider the number of decimal places as follows: 10000 digits floating point (SetAccuraccy=10000) with  SetAccuraccy Command. In examples considered in this article, the stopping criterion is the $\left|f(x_n)\right|\leq \epsilon$  where  $\epsilon=10^{-10000}$. The test non-linear functions are listed in Table-1.

Here we comparer performance of our new method $(\ref{eqn:316})$ to the methods of Yun (YN) \cite{Yun}, of Chun (CN) \cite{Chun} and Noor(NR)  \cite{Noor}. The results of comparison for the test function are provided in the Table 2. It can be seen that the resulting method from our class are accurate and efficient in terms of number of accurate decimal places to find the roots after some iterations. 


\small \begin{table}[htb]
 \caption{ Test functions and their roots.}
  \begin{tabular}{ll} \hline
Non-linear function                             & \hspace{50pt} Roots                  \\ \hline 
$f_1(x)=\sin^2x-x^2+1$                          & \hspace{50pt} 1.4044916482153412260  \\ 
$f_2(x)=x^2-e^x-3x+2$                           & \hspace{50pt} 0.25753028543986076046 \\ 
$f_3(x)=(x-1)^3-1$                              & \hspace{50pt} 2                      \\ 
$f_4(x)=x^3-10$                                 & \hspace{50pt} 2.1544346900318837218  \\ 
$f_5(x)=xe^x-\sin^2x+3cosx+5$                   & \hspace{50pt} -1.2076478271309189270 \\ 
$f_6(x)=e^{x^2+7x-30}-1$                        & \hspace{50pt} 3                      \\ 
$f_7(x)=x^2+\sin x+x$                           & \hspace{50pt} 0                      \\ 
$f_8(x)=\sin(2\cos x)-1-x^2+e^{\sin x^3}$       & \hspace{50pt} 1.3061752018468278250  \\ 
 \hline
  \end{tabular}
  \label{tab:abbr}
\end{table}

\newpage
\footnotesize
\begin{table}[htb]
 \caption{Comparison of different methods with the same total number of function evaluations (TNFE = 24)}
  \begin{tabular}{lllllllll} \hline
\tiny {$ Test Function$}    & $Guess$          &$CH$         &$YN$         &$NR$        &$(\ref{eqn:316})$ \\ \hline 
$\left|f_1\right|$ & -1.0      &0.15403e-267  &0.41715e-642  &0.52856e-2   &0.22708e-1576  \\ 
$$                 &  2.0      &0.35940e-1457 &0.23037e-1530 &0.11251e-3   &0.13526e-2046 \\
$$                 &  1.0      &0.15403e-267  &0.41715e-642  &0.52856e-2   &0.22708e-1576  \\   \hline

$\left|f_2\right|$ & 2.0       &0.74095e-1843 &0.42734e-1881 &0.16354e-4   &0.62927e-2949  \\ 
$$                 & 2.5       &0.70566e-1411 &0.31383e-1450 &0.199813-3   &0.18307e-1924  \\
$$                 & -1.5      &0.91945e-2055 &0.25223e-2127 &0.46951e-4   &0.13343e-2794 \\  \hline

$\left|f_3\right|$ & 3.5       &0.95901e-373  &0.60165e-400  &0.43860e-1   &0.20884e-671 \\ 
$$                 & 3.1       &0.22428e-546  &0.32703e-582  &0.13254e-1   &0.36490e-921 \\ 
$$                 & 1.5       &0.39254e+1    &0.21668e+1    &0.23818e+1   &0.38950e-663 \\ \hline

$\left|f_4\right|$ & 1.5       &0.610763-350  &0.25884e-740  &0.27421e-1   &0.47059e-1750  \\ 
$$                 & 1.2       &0.38918e-2    &0.12361e-298  &0.85122e+0   &0.716731e-925 \\
$$                 & 1.0       &0.749233+3    &0.27646e+5    &0.42601e+1   &0.18324e-518 \\  \hline

$\left|f_5\right|$ & -2.0      &0.74075e-140  &0.16833e-150  &0.10709e+1   &0.60969e-322 \\ 
$$                 & -1.5      &0.19698e-1039 &0.21190e-1089 &0.26108e-2   &0.52642e-1607\\ 
$$                 & -1.0      &0.19824e-693  &0.10164e-1074 &0.38167e-2   &0.49335e-2031 \\ \hline

$\left|f_6\right|$ & 3.5       &0.45434e-5    &0.99135e-6    &0.13063e+2   &0.15256e-22 \\  
$$                 & 3.2       &0.11804e-200  &0.14326e-215  &0.92374e-1   &0.69853e-422 \\ 
$$                 & 2.9       &0.20627e+114  &Indeterminate &0.42942e+0   &0.42509e-496 \\ \hline

$\left|f_7\right|$ & 0.3       &0.51508e-2921   &0.46046e-3026   &0.14553e-6   &0.13490e-3702 \\  
$$                 & 0.1       &0.10912e-4558   &0.46696e-4679   &0.77208e-10  &0.16080e-5443 \\ 
$$                 & -0.2      &0.27239e-2693   &0.16105e-2868   &0.19173e-6   &0.17394e-3843 \\ 
\hline

$\left|f_8\right|$ & 1.35      &0.29081e-4253   &0.15643e-4384   &0.62249e-9    &0.15434e-5050 \\  
$$                 & 1.31      &0.37148e-8008   &0.11024e-8139   &0.20749e-16   &0.66987e-8948 \\ 
$$                 & 1.29      &0.56923e-5177   &0.37152e-5314   &0.56633e-11   &0.20166e-6195\\ 
\hline

%

  \end{tabular}
  \label{tab:abbr}
\end{table}


\textsc{Jai Prakash Jaiswal\\
Department of Mathematics, \\
Maulana Azad National Institute of Technology,\\
Bhopal, M.P., India-462051}.\\
E-mail: { asstprofjpmanit@gmail.com; jaiprakashjaiswal@manit.ac.in}.\\\\
\end{document}